\newtheorem{thm}{Theorem}
\newtheorem{lem}[thm]{Lemma}
\newtheorem{obs}[thm]{Observation}
\newtheorem{cor}[thm]{Corollary}
\newcommand{\bb}[1]{\mathbb{#1}}
\newcommand{\ang}[1]{\langle#1\rangle}
\renewcommand{\AA}{\mathbb{A}}
\newcommand{\ZZ}{\mathbb{Z}_3}
\title{Sparsity of $3$-flow critical graphs}
\author{Zden\v{e}k Dvo\v{r}\'ak\thanks{Charles University, Prague, Czech Republic.
E-mail: {\tt rakdver@iuuk.mff.cuni.cz}. Supported by the ERC-CZ project LL2328 (Beyond the Four Color Theorem) of the Ministry of Education of Czech Republic.}\and Sergey Norin\thanks{Department of Mathematics and Statistics, McGill University. Email: {\tt snorin@math.mcgill.ca}.}}
\begin{document}
	
\maketitle

\begin{abstract}
A connected graph $G$ is $3$-flow-critical if $G$ does not have a nowhere-zero $3$-flow, but every proper contraction of $G$ does.
We prove that every $n$-vertex $3$-flow-critical graph other than $K_2$ and $K_4$ has at least $\tfrac{5}{3}n$ edges.  This bound is
tight up to lower-order terms, answering a question of Li et al. (2022).  It also generalizes the result of Koester (1991) on the maximum
average degree of 4-critical planar graphs.
\end{abstract}

For a positive integer $k$, a graph $F$ is \emph{$(k+1)$-critical} if $F$ is not $k$-colorable, but every proper subgraph of $F$ is.  The $(k+1)$-critical graphs are the natural
obstructions to $k$-colorability, in the following sense.
\begin{obs}
A graph $G$ is $k$-colorable if and only if it does not contain a $(k+1)$-critical subgraph.
\end{obs}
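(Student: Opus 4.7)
The observation is a standard equivalence, and the plan is to prove both directions directly from the definitions.

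For the forward direction, I would argue by contrapositive: if $G$ contains a $(k+1)$-critical subgraph $F$, then $F$ is not $k$-colorable, and since any $k$-coloring of $G$ restricts to a $k$-coloring of $F$, this means $G$ itself is not $k$-colorable either.

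For the backward direction, assume $G$ is not $k$-colorable. The plan is to extract a $(k+1)$-critical subgraph by a minimality argument: among all subgraphs of $G$ (with respect to both vertex and edge deletion) that fail to be $k$-colorable, choose one, call it $F$, that is minimal. Such an $F$ exists since $G$ is a finite non-$k$-colorable subgraph of itself. By construction $F$ is not $k$-colorable, while every proper subgraph of $F$ is $k$-colorable by minimality; hence $F$ is $(k+1)$-critical.

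There is no real obstacle here; the only subtlety is being consistent about what constitutes a ``proper subgraph'' (allowing both edge and vertex removal), which the minimality argument handles uniformly. The statement is essentially an unpacking of the definition, so it serves mainly to motivate why studying $(k+1)$-critical graphs is the right way to understand obstructions to $k$-colorability, setting the stage for the analogous notion of $3$-flow-criticality used in the rest of the paper.
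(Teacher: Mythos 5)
Your proof is correct, and it is the standard argument implicit in the word ``Observation'': the paper does not actually supply a proof, treating the statement as an immediate consequence of the definitions. Both directions you give (restriction of a coloring for the forward implication, extraction of a minimal non-$k$-colorable subgraph for the converse) are exactly the intended reasoning.
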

Naturally, the study of $(k+1)$-critical graphs can lead to interesting results on $k$-colorability.
As an example, Gallai~\cite{galfor} proved that if a graph $F\neq K_{k+1}$ is $(k+1)$-critical, then $F$ has average degree at least $k+\tfrac{k-2}{k^2+2k-2}$.
Since every $n$-vertex graph drawn on a surface $\Sigma$ of genus $g$ has average degree at most $6+O(g/n)$, we conclude
that for every $k\ge 6$, every $(k+1)$-critical graph drawn on $\Sigma$ has $O(g)$ vertices.  Thus, for every $k\ge 6$, we can determine whether
a graph $G$ drawn on $\Sigma$ is $k$-colorable by inspecting its subgraphs of bounded size, i.e., in polynomial time.
This is in contrast to the fact that for every $k\ge 3$, the $k$-colorability problem is NP-complete in general graphs~\cite{garey1979computers}.

The question of determining the minimum possible average degree of an $n$-vertex $(k+1)$-critical graph was first considered by Dirac~\cite{dirac} and Gallai~\cite{galfor}.
It was recently resolved by Kostochka and Yancey~\cite{kostochka}, who proved that the answer is $k+1-2/k + O_k(1/n)$.

Conversely, one can ask for the maximum possible average degree of an $n$-vertex $(k+1)$-critical graph.
Toft~\cite{toft} gave a construction of such graphs of average degree $\Omega(n)$, and thus a general bound on the
maximum average degree of $(k+1)$-critical graphs is not helpful when one considers sparse graphs.
In 1964, Gallai~\cite{gallai1964} conjectured that 4-critical \emph{planar} graphs have average degree less than $4$.
In 1985, Koester~\cite{koester1} disproved this conjecture, constructing a $4$-critical $4$-regular planar graph. 
Motivated by this line of work, Gr\"unbaum~\cite{grunbaum} asked to determine the precise value of $L=\limsup \tfrac{|E(F)|}{|V(F)|}$ over all 4-critical planar graphs $F$,
and showed that $L \geq 79 / 39$. Yao and Zhou~\cite{yao} later gave a construction showing that $L \geq 7 / 3$.
In the other direction, an interesting result of Stiebitz~\cite{stiebitz} shows that \emph{every} $n$-vertex $4$-critical graph has at most $n$ triangles,
and Koester~\cite{koester2} observed that this implies that $L \leq 2.5$.  Recently, Dvo\v{r}\'ak and Feghali~\cite{dvofeg}
gave a construction showing that $L=2.5$, which answers Gr\"unbaum's question.

Tutte~\cite{tutteflow} famously proved that in a plane graph, the existence of a proper $k$-coloring
is equivalent to the existence of a nowhere-zero $k$-flow in the dual graph, and proposed a program
towards the study of chromatic properties of planar graphs through the study of nowhere-zero flows in general graphs.
To state the result precisely, we need a few definitions.  Let $\AA$ be an Abelian group and let $G$ be a graph.
Let us fix an arbitrary orientation of the edges of $G$, and for each vertex $v \in V(G)$ and edge $e \in E(G)$, let 
$$ [v,e] = \begin{cases} 1, \qquad &\mathrm{if} \;e\; \mathrm{is\: oriented\: towards}\; v, \\-1, \qquad &\mathrm{if} \;e\; \mathrm{is\: oriented\: away\: from}\; v, \\   0, \qquad  &\mathrm{if} \;e\; \mathrm{is\: not\: incident \: with}\; v.\end{cases}$$
A function $\phi:E(G) \to \AA$ is an \emph{$\AA$-flow} if it satisfies the flow conservation equalities, i.e.,
if
$$\sum_{e\in E(G)} [v,e]\cdot \phi(e)=0$$
holds for every vertex $v\in V(G)$.  An $\AA$-flow $\phi$ is \emph{nowhere-zero} if $\phi(e)\neq 0$ for every $e\in E(G)$.
\begin{thm}[Tutte~\cite{tutteflow}]
Let $G$ be a connected plane graph and let $G^\star$ be its dual.  For any Abelian group $\AA$, the graph $G$ is $|\AA|$-colorable
if and only if $G^\star$ has a nowhere-zero $\AA$-flow.
\end{thm}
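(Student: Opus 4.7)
The plan is to exploit the standard planar duality between the cycle space of $G^\star$ and the cocycle (edge-cut) space of $G$, which sits at the heart of any proof of Tutte's identity. Fix an orientation of $E(G)$ and orient each dual edge $e^\star \in E(G^\star)$ by rotating $e$ by $90^\circ$ counterclockwise in the plane, so that the head of $e^\star$ is the face lying to the left of $e$. The key topological fact, a consequence of the Jordan curve theorem, is that for every cycle $C$ of $G$ the dual edges $\{e^\star : e \in E(C)\}$ form an edge cut in $G^\star$ separating the faces of $G$ inside $C$ from those outside. With this correspondence fixed, the theorem reduces to showing that proper colorings of $G$ with values in $\AA$ correspond, up to global translation by an element of $\AA$, to nowhere-zero $\AA$-flows of $G^\star$.

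For the forward direction, assume $c : V(G) \to \AA$ is a proper coloring (identifying the set of colors with $\AA$). Define $\phi(e^\star) = c(v) - c(u)$ whenever $e \in E(G)$ is oriented from $u$ to $v$. Since $c(u) \neq c(v)$, the function $\phi$ is nowhere-zero. To verify flow conservation at a vertex $f \in V(G^\star)$, I expand $\sum_{e^\star} [f,e^\star]\phi(e^\star)$ as a signed sum over the edges bounding the face $f$ of $G$; traversing that boundary walk, the orientation convention makes this a telescoping sum of the form $\sum_i (c(v_{i+1}) - c(v_i))$ around a closed walk, which vanishes.

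For the converse, given a nowhere-zero $\AA$-flow $\phi$ on $G^\star$, I build $c : V(G) \to \AA$ by choosing a root $v_0$, setting $c(v_0) = 0$, and for any other vertex $v$ putting $c(v) = \sum_{e \in P} \sigma_P(e)\phi(e^\star)$, where $P$ is a walk from $v_0$ to $v$ in $G$ and $\sigma_P(e) = \pm 1$ records whether $P$ traverses $e$ with or against its orientation. Well-definedness amounts to checking that $\sum_{e \in C}\sigma_C(e)\phi(e^\star) = 0$ for every closed walk $C$ in $G$; by planar duality, the dual edges of any cycle of $G$ form a cut in $G^\star$, and summing the flow conservation equation over all vertices of $G^\star$ lying on one side of this cut collapses to exactly the desired identity (internal dual edges cancel in pairs, and the cut edges carry signs matching $\sigma_C$). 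Finally, for any edge $e = uv$ of $G$, taking $e$ itself as a path yields $c(v) - c(u) = \pm\phi(e^\star) \neq 0$, so $c$ is proper.

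The main technical obstacle is the orientation bookkeeping: one must fix a coherent convention linking $E(G)$, $E(G^\star)$, the incidences $[\cdot,\cdot]$, and the traversal signs $\sigma$, and then verify that the signs in ``telescoping around a face'' (forward direction) and in ``summing flow conservation over one side of a cut'' (backward direction) come out consistently. Once that bookkeeping is in place, each implication is an instance of cycle-cocycle duality, and the only nontrivial geometric input is the Jordan curve theorem, which supplies the bijection between cycles of $G$ and bonds of $G^\star$.
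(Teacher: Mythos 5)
The paper does not give a proof of this statement: it is Tutte's classical coloring--flow duality, stated as background and cited directly from Tutte's work~\cite{tutteflow}. So there is no argument in the paper to compare against; what you have written is a from-scratch proof of the cited theorem.

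That said, your sketch is the standard tension/potential argument and is essentially correct. Fixing the orientation of $e^\star$ by a $90^\circ$ rotation, sending a proper coloring $c$ to the tension $\phi(e^\star)=c(v)-c(u)$, and recovering $c$ from $\phi$ by integrating along paths from a root are exactly the right moves, and the Jordan curve theorem is the correct geometric input identifying cycles of $G$ with bonds of $G^\star$. Two small points you gloss over that are worth making explicit if you flesh this out. First, in the well-definedness step you reduce to showing $\sum_{e\in C}\sigma_C(e)\phi(e^\star)=0$ for every closed walk $C$, but the Jordan-curve argument only directly handles simple cycles; you need the (easy) additional observation that every closed walk is an edge-disjoint union of cycles, or equivalently that simple cycles generate the cycle space over $\mathbb{Z}$. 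Second, the degenerate cases deserve a sentence: a loop in $G$ has no proper coloring and its dual is a bridge in $G^\star$ (which forces a zero flow value), and a bridge in $G$ dualizes to a loop in $G^\star$ (whose two incidence signs cancel in flow conservation, matching the cancellation of the repeated boundary traversal in your telescoping sum). Both cases are consistent with the equivalence, but your construction should be checked to degrade gracefully there. Beyond that, the only remaining work is the sign bookkeeping you already flag, and it does come out consistently with your CCW convention.

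One further remark on generality: the statement is phrased for an arbitrary Abelian group $\AA$, not just $\mathbb{Z}_{|\AA|}$. Your proof handles this directly, since every step uses only that $\AA$ is an Abelian group and that a proper coloring into $\AA$ is the same thing as an $|\AA|$-coloring. This is cleaner than the alternative route of first invoking Tutte's theorem that the existence of a nowhere-zero $\AA$-flow depends only on $|\AA|$ and then proving the $\mathbb{Z}_k$ case.
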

Tutte also formulated a number of conjectures concerning the existence of nowhere-zero flows.  Most relevantly for us, he conjectured that every $4$-edge-connected
graph has a nowhere-zero $\ZZ$-flow.  As his other flow conjectures, this conjecture is still open, though we know
that every $6$-edge-connected graph has a nowhere-zero $\ZZ$-flow~\cite{ltwz} and that it would suffice to prove the conjecture for $5$-edge-connected graphs~\cite{kochol2001equivalent}.

Given the importance of the study of critical graphs for coloring, it is natural to consider the dual concept for nowhere-zero flows.
A graph $F$ is \emph{$\AA$-flow-critical} if $F$ is connected and $F$ does not have a nowhere-zero $\AA$-flow, but for every edge $e\in E(F)$,
the graph $F/e$ obtained by contracting the edge $e$ (preserving loops and parallel edges that may arise) does.
The relationship between critical and flow-critical graphs is outlined in the following observation.
\begin{obs}\label{obs-dualcrit}
Let $\AA$ be an Abelian group.  A connected graph $G$ has a nowhere-zero $\AA$-flow if and only if no contraction of $G$ is
$\AA$-flow-critical.  Moreover, if $G$ is a plane graph, then $G$ is $\AA$-flow-critical if and only if its dual is
$(|\AA|+1)$-critical.
\end{obs}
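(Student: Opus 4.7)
The plan is to verify the two claims separately. The first is a general statement about any Abelian group $\AA$ and follows from restricting flows along contractions. The second combines Tutte's duality theorem with the plane-graph correspondence between contraction in $G$ and edge-deletion in $G^\star$.

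For the first claim, I would argue both directions directly. If $\phi$ is a nowhere-zero $\AA$-flow on $G$, then for any contraction $H = G/S$ the restriction $\phi|_{E(H)}$ is a nowhere-zero $\AA$-flow on $H$: conservation at the contracted vertex is obtained by summing the conservation identities at the vertices of $S$ (edges inside $S$ cancel), and conservation at the remaining vertices is inherited, so no contraction of $G$ is $\AA$-flow-critical. Conversely, assuming $G$ has no nowhere-zero $\AA$-flow, I would take a contraction $H$ of $G$ with no nowhere-zero $\AA$-flow and with the minimum number of edges (such $H$ exists, since $G$ itself qualifies as the trivial contraction). Then $H$ is connected and, by minimality, every proper contraction of $H$---being itself a contraction of $G$ with strictly fewer edges---admits a nowhere-zero $\AA$-flow, so $H$ is $\AA$-flow-critical.

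For the planar claim, I would apply Tutte's theorem twice. First, applied to the connected plane graph $G^\star$ whose dual is $G$, it yields that $G^\star$ is $|\AA|$-colorable if and only if $G$ has a nowhere-zero $\AA$-flow. Second, applied to each contracted graph $G/e$, whose plane dual is $G^\star \setminus e^\star$, it yields that $G^\star \setminus e^\star$ is $|\AA|$-colorable if and only if $G/e$ has a nowhere-zero $\AA$-flow. The two defining conditions of $\AA$-flow-criticality of $G$ thus translate exactly to $G^\star$ being non-$|\AA|$-colorable while every $G^\star \setminus e^\star$ is $|\AA|$-colorable, which is the definition of $(|\AA|+1)$-criticality (using the standard observation that every proper subgraph of $G^\star$ is contained in some edge-deleted subgraph). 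The only bookkeeping concern is the degenerate case when $e$ is a loop of $G$, so that $e^\star$ is a bridge of $G^\star$ and the duality for $G/e$ requires a small adjustment; but in that situation $G/e = G - e$ and the loop affects neither the flow nor the coloring side, so the equivalence still goes through by direct inspection.
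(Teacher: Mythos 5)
The paper records this statement as an unproved observation, so there is no in-text argument to compare against; your blind proof is evaluated on its own merits, and it is correct and follows the natural route one would take.

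On the first claim, the forward direction (restriction of a nowhere-zero flow to a contraction, with conservation at a merged vertex obtained by summing the identities over the contracted set) and the converse (an edge-minimal contraction without a nowhere-zero $\AA$-flow is $\AA$-flow-critical, since its proper contractions are contractions of $G$ with strictly fewer edges, and contractions of a connected graph stay connected) are both sound. On the planar claim, two small points deserve tightening. First, your parenthetical "every proper subgraph of $G^\star$ is contained in some edge-deleted subgraph" is false in general: removing an isolated vertex gives a proper subgraph with the same edge set. It is true here because $G$ is connected with at least one edge, so $G^\star$ has no isolated vertices; you should say so. Second, your worry about loops of $G$ and the resulting bridge $e^\star$ in $G^\star$ never materializes: if $e$ were a loop then $G/e = G-e$ would have a nowhere-zero $\AA$-flow by criticality, and assigning any non-zero value to the loop extends it to one on $G$ (loops contribute nothing to conservation), contradicting that $G$ has no nowhere-zero $\AA$-flow. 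Symmetrically, a $(|\AA|+1)$-critical graph with $|\AA|\geq 2$ has no bridges, so in the converse direction $G$ again has no loops and the contraction--deletion duality $(G/e)^\star = G^\star - e^\star$ applies cleanly with $G/e$ and hence $G^\star-e^\star$ connected. With those two remarks your argument is complete.
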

Observe that $K_2$ is $\AA$-flow-critical for every Abelian group $\AA$.  Seymour~\cite{seymour1981nowhere} proved that for every Abelian group $\AA$ of size at least $6$,
every $2$-edge-connected graph has a nowhere-zero $\AA$-flow, and thus $K_2$ is the only $\AA$-flow-critical graph.  Tutte's 5-flow conjecture can be restated as saying
that $K_2$ is the only $\mathbb{Z}_5$-flow-critical graph.  Furthermore, the $\mathbb{Z}_2$-critical graphs are exactly the 2-vertex graphs with an odd number of parallel
edges between the two vertices.  Thus, the study of $\AA$-flow-critical graphs is interesting only for groups $\AA$ of size $3$, $4$, and possibly $5$.

In this note, we focus on $\ZZ$-flow-critical graphs.  The average degree of these graphs was studied by Li et al.~\cite{li20223}.
They observed that the result of Kostochka and Yancey~\cite{kostochka} implies that every \emph{planar} $\ZZ$-flow-critical graph
has average degree less than~$5$.  They also observed that this is not the case in general:  For every $n\ge 7$, the graph obtained from
$K_{3,n-3}$ by adding an edge between two of the vertices of degree $n-3$ is $\ZZ$-flow-critical, and has average degree
$6-16/n$.  They conjectured that this is the maximum possible average degree of $n$-vertex $\ZZ$-flow-critical
graphs for $n\ge 7$, and proved the bound $8-20/n$.  They also proved a lower bound on their average degree, based on the following lemma.
\begin{lem}[Li et al.~\cite{li20223}]\label{lemma-nocycle}
If a graph $F\neq K_2$ is $\ZZ$-flow-critical, then $\delta(F)\ge 3$, and if $F$ is not an odd wheel, then vertices of degree three induce
a forest in $F$.
\end{lem}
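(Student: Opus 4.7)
The proof naturally splits into the two assertions.  For the bound $\delta(F)\ge 3$, I would rule out vertices of degree at most two by contradicting flow-criticality.  A pendant vertex $v$ with incident edge $e$ persists as a pendant in $F/e'$ for every other edge $e'\in E(F)\setminus\{e\}$, so conservation at $v$ forces $\phi(e)=0$ in any $\ZZ$-flow on $F/e'$, ruling out a nowhere-zero flow there; since $F\neq K_2$, such an $e'$ exists, contradicting criticality.  A vertex $v$ of degree two with incident edges $e_1,e_2$ lets us take a nowhere-zero $\ZZ$-flow on $F/e_1$ (guaranteed by criticality) and lift it to all of $F$ by setting $\phi(e_1)=\pm\phi(e_2)\neq 0$ via conservation at $v$, producing a nowhere-zero $\ZZ$-flow on $F$, a contradiction.

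For the forest / odd wheel part, I would suppose that the degree-three vertices contain a cycle $C=v_1v_2\cdots v_\ell$, with cycle edges $f_i=v_iv_{i+1}$ and ``outside'' edges $g_i=v_iu_i$; the goal is to show $F$ is the wheel $W_\ell$ with $\ell$ odd.  The main tool is the notion of an \emph{$e$-flow}: for each $e\in E(F)$, criticality supplies a nowhere-zero $\ZZ$-flow on $F/e$, which lifts to $F$ with $\phi(e)=0$ forced (else $F$ itself would have a nowhere-zero flow).  The crucial observation is that at any degree-three vertex $v$ with all three incident edges nonzero, the signed contributions $[v,e_i]\phi(e_i)\in\{1,2\}$ sum to zero in $\ZZ$ and must therefore all be equal; I call such $v$ \emph{monochromatic}.

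Writing $a_i=\phi(f_i)$ and applying monochromaticity at every $v_i$ to an $e$-flow with $e\notin E(C)\cup\{g_1,\ldots,g_\ell\}$, the cycle-edge values alternate around $C$ (i.e.\ $a_i=-a_{i-1}$), which wraps consistently around the cycle only when $\ell$ is even; so for $\ell$ odd no such $e$-flow exists, forcing $E(F)=E(C)\cup\{g_1,\ldots,g_\ell\}$.  A parallel analysis of a $g_1$-flow (monochromatic at every $v_i$ for $i\neq 1$, with $\phi(g_1)=0$ giving $a_\ell=a_1$ from conservation at $v_1$) shows such a flow exists only for $\ell$ odd, so $\ell$ must indeed be odd.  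Each $u_i\notin V(C)$ (else some $v_j$ would have degree at least $4$), so $U:=V(F)\setminus V(C)$ is an independent set, and by the first part each $u\in U$ has $|S_u|\ge 3$, where $S_u:=\{i:u_i=u\}$.

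The main obstacle is proving $|U|=1$.  For each $k$, a direct computation of the $g_k$-flow (monochromaticity at each $v_i$ with $i\neq k$, combined with $a_{k-1}=a_k$ from $\phi(g_k)=0$) gives $a_{k+j}=(-1)^j a_k$ and $\phi(g_{k+j})=a_{k+j}$ for $j=1,\ldots,\ell-1$.  Conservation at each $u\in U$ becomes the linear constraint $\sum_{i\in S_u\setminus\{k\}}\chi_k(i)\equiv 0\pmod 3$, where $\chi_k(i):=(-1)^{(i-k)\bmod\ell}$ for $i\neq k$.  Subtracting the constraint at $k+1$ from the one at $k$, and using $\chi_{k+1}(i)=-\chi_k(i)$ for $i\neq k,k+1$ together with $\chi_k(k+1)=-1$ and $\chi_{k+1}(k)=(-1)^{\ell-1}=1$ (where $\ell$ being odd is essential), a short manipulation yields $[k\in S_u]\equiv[k+1\in S_u]\pmod 3$.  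Since both sides lie in $\{0,1\}$ this forces the indicator of $S_u$ to be constant along $C$, so $S_u\in\{\emptyset,\{1,\ldots,\ell\}\}$; with $|S_u|\ge 3$, only $S_u=\{1,\ldots,\ell\}$ survives.  Hence $|U|=1$ and $F=W_\ell$ with $\ell$ odd, completing the proof.
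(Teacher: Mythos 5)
The paper does not prove this lemma: it is cited directly from Li et al.~\cite{li20223}, so there is no in-paper argument to compare against. Evaluating your proof on its own merits, it is essentially correct and self-contained, and it rests on a nice observation that is worth naming: at a degree-three vertex $v$ where a $\ZZ$-flow $\phi$ is nonzero on all three incident edges, the three signed values $[v,e]\phi(e)$ lie in $\{1,2\}$ and sum to $0$ in $\bb{Z}_3$, hence are all equal. Phrased this way, the quantities $\eps_i$ (the common value at $v_i$) satisfy $\eps_{i+1}=-\eps_i$ along any cycle edge $f_i$ with both endpoints monochromatic, which is an orientation-free version of your ``$a_i=-a_{i-1}$'' statement and cleans up the parity argument: any $e$-flow with $e\notin E(C)\cup\{g_1,\dots,g_\ell\}$ forces $\ell$ even, while the $g_k$-flow forces $\ell$ odd; since $g_k$-flows exist by criticality, $\ell$ is odd and $E(F)=E(C)\cup\{g_1,\dots,g_\ell\}$. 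The $\delta(F)\ge3$ part is routine and correct (including the parallel-edge subcase for degree two, where the contracted edge $e_2$ becomes a loop and the lift still works). The $|U|=1$ step is the genuinely delicate part, and your manipulation does check out: from $\sum_{i\in S_u\setminus\{k\}}\chi_k(i)=0$ and $\sum_{i\in S_u\setminus\{k+1\}}\chi_{k+1}(i)=0$ with $\chi_{k+1}(i)=-\chi_k(i)$ for $i\neq k,k+1$, $\chi_k(k+1)=-1$, and $\chi_{k+1}(k)=(-1)^{\ell-1}=1$, one reads off $\sum_{i\in S_u\setminus\{k,k+1\}}\chi_k(i)=[k+1\in S_u]$ and $=[k\in S_u]$, hence $[k\in S_u]=[k+1\in S_u]$ and $S_u=\{1,\dots,\ell\}$, giving $|U|=1$.

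Two small points you should make explicit to close gaps. First, take $C$ to be an \emph{induced} (chordless) cycle in the subgraph spanned by degree-three vertices; otherwise some $v_i$ could have a chord to $C$ and would not have a well-defined single external edge $g_i$. Second, the claim that $u_i\notin V(C)$ deserves a line for the adjacent case $u_i=v_{i\pm1}$ (a parallel edge to $f_i$ or $f_{i-1}$), which again forces a vertex of $C$ to have degree at least four; this also justifies $|S_u|=\deg(u)\ge3$ since parallel edges from $u$ into $C$ would raise some $v_i$ above degree three. With these clarifications the argument is complete.
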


Thus, if $S$ is the set of vertices of $F$ of degree three, we can lower-bound the number of edges incident with $S$,
giving us $|E(F)|\ge 3|S|-|E(F[S])|\ge 2|S|+1$.  On the other hand, we have $2|E(F)|=\sum_{v\in V(F)} \deg v\ge 4n-|S|$.
These inequalities imply $|E(F)|\ge\tfrac{8n+1}{5}$, and thus $F$ has average degree at least $\tfrac{16}{5}+O(1/n)$.
Li et al.~\cite{li20223} improved this inequality slightly by showing that it is never tight.
\begin{thm}[Li et al.~\cite{li20223}]\label{thm-lb}
If an $n$-vertex graph $F\neq K_2,K_4$ is $\ZZ$-flow-critical, then $|E(F)|\ge\tfrac{8n+2}{5}$.
\end{thm}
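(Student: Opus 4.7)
My approach is to analyze when the derivation of $|E(F)|\ge(8n+1)/5$ given in the preceding paragraph is tight, and rule out the tight configurations using $\ZZ$-flow-criticality. Let $S$ denote the set of degree-$3$ vertices of $F$, and let $m$ be the number of components of the forest $F[S]$ (with $m=0$ if $S=\emptyset$). Refining $|E(F[S])|\le|S|-1$ to $|E(F[S])|\le|S|-m$ upgrades the preceding derivation to $|E(F)|\ge(8n+m)/5$, which settles the case $m\ge 2$ directly. The case $S=\emptyset$ is handled by $|E(F)|\ge 2n\ge(8n+2)/5$, and for an odd wheel $W_{2k+1}$ with $k\ge 2$ one has $|E|=4k+2\ge(8n+2)/5$. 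In the remaining case ($F$ not an odd wheel, $m=1$), tightness $|E(F)|=(8n+1)/5$ forces a rigid structure: every vertex of $V\setminus S$ has degree exactly $4$ and has all its neighbors in $S$, $F[S]$ is a tree, and $|S|=(4n-2)/5$.

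A graph has a nowhere-zero $\ZZ$-flow if and only if it admits an orientation with $d^+(v)\equiv d^-(v)\pmod 3$ at every vertex, a standard consequence of Tutte's work. In the rigid structure, this forces every degree-$3$ vertex to be a source or sink and every degree-$4$ vertex to have exactly two in- and two out-edges. Since $F[S]$ is a connected tree with unique bipartition $(A,B)$, the source/sink assignment on $S$ must coincide with $(A,B)$ up to a global swap. Hence $F$ has a nowhere-zero $\ZZ$-flow if and only if every $u\in V\setminus S$ satisfies $\alpha_u:=|N(u)\cap A|=2$; call $u$ \emph{unbalanced} otherwise. Counting edges from $V\setminus S$ to $A$ gives $\sum_u\alpha_u=3|A|-(|S|-1)=2|A|-|B|+1$, and the tight structure gives $|V\setminus S|=(|S|+2)/4$, so
\[
    \sum_{u\in V\setminus S}(\alpha_u-2)=\frac{3(|A|-|B|)}{2}.
\]
Each summand belongs to $\{-2,-1,0,1,2\}$, so a single nonzero summand would force $|A|-|B|\in\{\pm 2/3,\pm 4/3\}$, contradicting $|A|-|B|\in\mathbb{Z}$. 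Therefore, if $F$ has no nowhere-zero $\ZZ$-flow, there are at least two unbalanced vertices $u^*,u^{**}$.

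To derive a contradiction, I would exploit flow-criticality: every contraction $F/e$ admits a nowhere-zero $\ZZ$-flow. The plan is to choose $e=u^*v$ for a suitable $v\in N(u^*)\cap S$, so that in $F/e$ the forest on degree-$3$ vertices is $F[S]-v$, whose components admit independent bipartition choices. The aim is to show that no combination of per-component bipartition labels can simultaneously make $u^{**}$ balanced in $F/e$ and satisfy the orientation constraint at the merged degree-$5$ vertex, contradicting the flowability of $F/e$. The main obstacle is precisely this final contraction step: the difficult sub-case occurs when all four neighbors of $u^{**}$ lie in a single component of $F[S]-v$ with a $(2,2)$ split between $A$ and $B$, so one label choice does balance $u^{**}$; handling it requires either a different choice of $v$ exploiting the local tree structure at $u^*$, or a combined argument using both unbalanced vertices as anchors.
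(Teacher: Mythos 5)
The theorem you are proving is attributed in the paper to Li et al.\ and the paper does not include a proof of it (the paper's own contribution is the stronger Theorem~\ref{thm-lb-best}, proved by a linear-algebraic method that bears no resemblance to your approach). So your proposal can only be judged on its own terms, and on those terms it is incomplete. The reductions you make are sound and carefully checked: the refinement $|E(F)|\ge(8n+m)/5$, the separate handling of $S=\emptyset$, $m\ge 2$, and odd wheels, the derivation of the rigid tight structure, the mod-$3$ orientation reformulation forcing degree-$3$ vertices to be sources/sinks aligned with the bipartition $(A,B)$ of the tree $F[S]$, and the computation $\sum_u(\alpha_u-2)=\tfrac{3}{2}(|A|-|B|)$ ruling out a single unbalanced vertex -- all of this is correct.

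However, the argument stops exactly where the real work begins. You conclude that the tight configuration forces at least two unbalanced vertices, and then propose to contract an edge $e=u^*v$ and argue that $F/e$ cannot admit the required orientation -- but you explicitly flag an unresolved sub-case (when all four neighbours of $u^{**}$ lie in one component of $F[S]-v$ with a $(2,2)$ split across $(A,B)$), and you offer only a vague indication of how one might get around it (``a different choice of $v$'' or ``a combined argument using both unbalanced vertices''). This is the crux of the theorem: one must actually exhibit, for \emph{some} edge $e$, an obstruction to a mod-$3$ orientation of $F/e$ that survives every choice of per-component bipartition labels. Without carrying this out, the proof does not rule out the tight case, and the claim $|E(F)|\ge(8n+2)/5$ is not established. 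Note also that once you contract $u^*v$, the merged vertex has degree $5$, which under the mod-$3$ orientation constraint forces in/out-degrees $(4,1)$ or $(1,4)$ -- a new constraint type your sketch never engages with; any completed argument will need to track it.
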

On the other hand, the construction of dense planar 4-critical graphs by Dvo\v{r}\'ak and Feghali~\cite{dvofeg} together with Observation~\ref{obs-dualcrit} gives
examples of sparse $\ZZ$-flow-critical planar graphs.
\begin{thm}[Dvo\v{r}\'ak and Feghali~\cite{dvofeg}]\label{thm-constr}
For infinitely many positive integers $n$, there exists an $n$-vertex $\ZZ$-flow-critical planar graph with $\tfrac{5}{3}n+O(\sqrt{n})$ edges.
\end{thm}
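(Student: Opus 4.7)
The plan is to invoke the dual direction of Observation~\ref{obs-dualcrit}: if $H$ is a 4-critical plane graph, then its dual $H^\star$ is a $\ZZ$-flow-critical plane graph. Thus constructing sparse $\ZZ$-flow-critical planar graphs reduces to constructing dense 4-critical planar graphs, and the latter task has already been accomplished by Dvo\v{r}\'ak and Feghali in establishing $L=5/2$. Concretely, I would use their construction to obtain, for infinitely many values of $m$, a 4-critical plane graph $H$ on $m$ vertices with $|E(H)|=\tfrac{5}{2}m-O(\sqrt{m})$ edges.

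Given such an $H$, I would fix its planar embedding and set $G:=H^\star$. By Observation~\ref{obs-dualcrit}, $G$ is $\ZZ$-flow-critical. Since duality interchanges faces with vertices and preserves edges, Euler's formula $|V(H)|-|E(H)|+|F(H)|=2$ yields
\[
|V(G)|=|F(H)|=2-|V(H)|+|E(H)|=\tfrac{3}{2}m-O(\sqrt{m}),\qquad |E(G)|=|E(H)|=\tfrac{5}{2}m-O(\sqrt{m}).
\]
Writing $n:=|V(G)|$, one then obtains $m=\tfrac{2}{3}n+O(\sqrt{n})$, and hence $|E(G)|=\tfrac{5}{3}n+O(\sqrt{n})$, as claimed.

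The only minor point to verify is that $G=H^\star$ is a legitimate object on which $\ZZ$-flow-criticality is defined: it may carry parallel edges (coming from pairs of faces of $H$ sharing more than one boundary edge), but it contains no loops because a 4-critical graph is bridgeless. Since the definition of $\ZZ$-flow-criticality allows multigraphs, this causes no issue. The genuinely hard step---producing dense 4-critical planar graphs in the first place---is entirely outsourced to Dvo\v{r}\'ak and Feghali, so all that remains is the bookkeeping via planar duality and Euler's formula carried out above.
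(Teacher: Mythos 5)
Your proposal matches the paper's own route exactly: the paper states this theorem as a consequence of the Dvořák--Feghali construction of dense 4-critical planar graphs combined with Observation~\ref{obs-dualcrit}, which is precisely the dualization-plus-Euler's-formula argument you carry out. The bookkeeping ($n = \tfrac{3}{2}m - O(\sqrt{m})$, hence $|E(G)| = \tfrac{5}{3}n + O(\sqrt{n})$) and the remark that 4-critical graphs are bridgeless (being 2-connected) so the dual is loopless are both correct.
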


The main result of this note is an improvement of the bound from Theorem~\ref{thm-lb}, matching the density of the graphs obtained using Theorem~\ref{thm-constr} up to lower-order terms.
\begin{thm}\label{thm-lb-best}
If an $n$-vertex graph $F\neq K_2,K_4$ is $\ZZ$-flow-critical, then $|E(F)|\ge\tfrac{5}{3}n$.
\end{thm}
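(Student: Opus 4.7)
\emph{Proof plan.}
The approach is discharging. Assign to each vertex $v$ the initial charge $c(v) := \deg_F(v) - \tfrac{10}{3}$, so that $\sum_{v} c(v) = 2|E(F)| - \tfrac{10}{3}n$, and therefore the target $|E(F)| \ge \tfrac{5}{3}n$ is equivalent to $\sum_{v} c(v) \ge 0$. I would aim to redistribute charge so that every vertex ends with non-negative charge. Odd wheels can be handled directly, since $W_{2k+1}$ has $2k+2$ vertices and $4k+2$ edges and $4k+2 \ge \tfrac{5}{3}(2k+2)$ whenever $k \ge 2$ (the case $k=1$ gives the excluded graph $K_4$). For the remainder of the argument we may assume that $F$ is not an odd wheel, so that by Lemma~\ref{lemma-nocycle} the set $S$ of degree-$3$ vertices of $F$ induces a forest. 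Let $R := V(F) \setminus S$ and let $T_1,\dots,T_c$ be the components of $F[S]$, with $s_i := |V(T_i)|$; a standard edge count gives that the number of edges between $V(T_i)$ and $R$ is $3s_i - 2(s_i-1) = s_i + 2$.

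The first discharging rule I would try is that each edge $uv$ with $u \in R$ and $v \in S$ sends charge $\tfrac13$ from $u$ to $v$. With this rule, the collective charge on $V(T_i)$ becomes $-\tfrac{s_i}{3} + \tfrac{s_i+2}{3} = \tfrac{2}{3}$, i.e.~each tree carries a surplus of $\tfrac{2}{3}$; because $V(T_i)$ is connected I can further shuffle charge along tree edges to ensure that every individual $S$-vertex ends with non-negative charge. On the $R$ side, a vertex $u \in R$ of degree $d$ with $k$ neighbours in $S$ ends with charge $d - \tfrac{10}{3} - \tfrac{k}{3}$, which is non-negative exactly when $k \le 3d - 10$; this is automatic for $d \ge 5$, and also for $d = 4$ when $k \le 2$.

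The main obstacle is therefore to deal with the remaining ``bad'' $R$-vertices --- degree-$4$ vertices with $k \in \{3,4\}$ neighbours in $S$ --- each carrying a residual deficit of $\tfrac{k-2}{3}$. I would address this with a second pass in which the $\tfrac{2}{3}$ surplus from each tree component $T_i$ is routed to bad $R$-vertices in its neighbourhood. Making this work requires proving reducibility lemmas for $\ZZ$-flow-critical graphs which certify that every bad $R$-vertex has access to enough nearby tree surplus. I would expect such lemmas to be established by contracting a small subgraph around the configuration, using $\ZZ$-flow-criticality of $F$ to obtain a nowhere-zero $\ZZ$-flow on the contraction, and then arguing that this flow lifts back to $F$ --- a contradiction --- unless the local tree structure provides the required surplus. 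The crux, and the place where the argument must be tuned tightly, is calibrating these reducibility lemmas so that the surplus of each tree component $T_i$ suffices to pay off all the bad $R$-neighbours of $T_i$; the bound is expected to be essentially tight since Theorem~\ref{thm-constr} matches it up to lower-order terms.
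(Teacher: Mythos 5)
Your discharging setup is clean and the wheel case is handled correctly, but the proposal has a genuine gap at exactly the point where the difficulty lies: you never establish the ``reducibility lemmas'' that would let a degree-$4$ vertex $u$ with $k\in\{3,4\}$ neighbours of degree three recover its deficit of $\tfrac{k-2}{3}$ from nearby tree surplus. You gesture at contracting a small configuration around $u$, obtaining a nowhere-zero $\ZZ$-flow on the contraction by criticality, and lifting it back, but no such local configuration is identified, no contraction is specified, and no lifting argument is given. In fact the sheer generality of what must be excluded is daunting: one must rule out, for instance, a degree-$4$ vertex all four of whose neighbours lie in tree components that are also adjacent to several other deficient degree-$4$ vertices competing for the same $\tfrac23$ of surplus. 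Discharging arguments of this kind typically need a carefully tuned family of reducible configurations, and there is no a priori reason a finite such family suffices here; the bound is tight up to lower-order terms by Theorem~\ref{thm-constr}, which means there is essentially no slack to absorb an imprecise rule. As written, the plan reduces the theorem to an unproven and unstated collection of structural lemmas about $\ZZ$-flow-critical graphs.

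For contrast, the paper avoids local case analysis entirely. It proves (Lemma~\ref{l:main}) the stronger structural bound $|E(G)|\ge n+n_3$ for non-wheel $\ZZ$-flow-critical graphs by a linear-algebra argument: to each vertex $v$ one associates a subspace $\Delta_v\subseteq\bb{Z}_3^{E(G)}$ of dimension $1$ (if $\deg v\neq 3$) or $2$ (if $\deg v=3$), and Lemma~\ref{l:tech} shows that the $\Delta_v$ for $v\neq w$ (any fixed $w$) are linearly independent, using only Observation~\ref{obs-zero-edge} (existence, for each edge $e$, of a $\ZZ$-flow vanishing precisely on $e$). Summing dimensions gives $|E(G)|\ge n+n_3-1$, and a short further argument with two flows $\phi_1,\phi_2$ improves this to $n+n_3$ unless $G$ is a wheel. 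Combined with $2|E(G)|\ge 4n-n_3$, this yields $3|E(G)|\ge 5n$ with no discharging at all. The linear-algebraic route is what makes the argument short and what yields the algorithmic Corollary~\ref{cor-locate}; your route, even if it could be completed, would require a substantial reducibility analysis that is entirely absent from the proposal.
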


Let $\ell(n)$ be the minimum number of edges of an $n$-vertex $\ZZ$-flow-critical graph.  Theorems~\ref{thm-lb} and \ref{thm-constr}
show that $0\le \ell(n) - \tfrac{5}{3}n \le O(\sqrt{n})$.  We suspect that the upper bound on this lower order term can be improved, as we do not
see why the $O(\sqrt{n})$ dependency should arise in the non-planar setting.

A motivation for obtaining lower bounds on the density of $\ZZ$-flow-critical graphs comes from a pre-processing step in (superpolynomial-time) algorithms
for finding a nowhere-zero $\ZZ$-flow: If the input $n$-vertex graph $G\neq K_2,K_4$ has less than $\tfrac{5}{3}n$ edges, then Theorem~\ref{thm-lb-best}
implies that there exists an edge $e\in E(G)$ such that $G$ has a nowhere-zero $\ZZ$-flow if and only if $G/e$ does; we call such an edge \emph{nowhere-zero $\ZZ$-flow irrelevant}.
Assuming we can locate such an edge in polynomial time, this gives us a way to reduce the size of the input before applying more time-consuming algorithms.  And indeed, the proof of Theorem~\ref{thm-lb-best}
has the following consequence.
\begin{cor}\label{cor-locate}
There exists an algorithm that, given an $n$-vertex connected graph $G$ with at most $\tfrac{5}{3}n+k$ edges, in time $\text{poly}(n)+O(k8^kn)$ returns a nowhere-zero $\ZZ$-flow in $G$,
or correctly decides that $G$ does not have any nowhere-zero $\ZZ$-flow, or returns a nowhere-zero $\ZZ$-flow irrelevant edge of $G$.
\end{cor}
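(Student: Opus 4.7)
My plan is to convert the proof of \cref{thm-lb-best} into an algorithm. I expect that proof to proceed by a discharging argument that identifies a family of \emph{reducible configurations}---bounded-size local substructures whose presence in a $\ZZ$-flow-critical graph $F\neq K_2,K_4$ contradicts criticality by exhibiting a nowhere-zero $\ZZ$-flow irrelevant edge. The sparsity bound $|E(F)|\ge\tfrac{5}{3}|V(F)|$ then follows by discharging in the absence of all such configurations.

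For the main polynomial-time part of the algorithm, I first dispose of degenerate cases. If $G$ has a vertex of degree at most $1$, no nowhere-zero $\ZZ$-flow exists; if $G$ has a vertex of degree $2$, one of its incident edges is irrelevant (its flow value is forced by the other up to sign), which is consistent with \cref{lemma-nocycle}. Otherwise $\delta(G)\ge 3$, and I enumerate the reducible configurations identified in the proof of \cref{thm-lb-best} by a linear scan of bounded-radius neighborhoods. When $|E(G)|\le\tfrac{5}{3}|V(G)|$, the discharging argument guarantees that at least one such configuration occurs, and the corresponding irrelevant edge can be produced in polynomial time.

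The excess parameter $k$ is handled by branching. When $|E(G)|$ exceeds $\tfrac{5}{3}|V(G)|$ but by at most $k$, the discharging argument concentrates the extra charge on a bounded number of local witnesses. At each such witness, I branch over a constant number (at most $8$) of possibilities---for example, the nowhere-zero $\ZZ$-values of three distinguished edges at a key vertex constrained by flow conservation---and in each branch either commit to a partial flow assignment that strictly reduces the effective excess, or detect infeasibility. Iterating produces a branching tree of depth $O(k)$ with branching factor $8$, each leaf invoking the main polynomial algorithm on a reduced graph, for a total overhead of $O(k8^k n)$.

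The principal obstacle will be turning the structural/discharging proof into an effective search: each reducible configuration needs an explicit polynomial-time local \emph{flow extension} certificate, and the branching step must come with an invariant guaranteeing both strict progress in $k$ on every branch and completeness of the $8$ local choices. Both should follow from a careful accounting of the witness structures produced by the discharging argument, together with the routine observation that contraction of an irrelevant edge preserves the instance's answer, so that the algorithm can recurse along any contraction chain of irrelevant edges found during the main loop.
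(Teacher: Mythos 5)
Your proposal rests on a guess about the structure of the proof of \cref{thm-lb-best} that is incorrect, and the algorithm you sketch does not actually materialize from it. You assume the sparsity bound is proved by discharging against a finite list of reducible configurations, which would let you scan bounded-radius neighborhoods; but the paper's proof is linear-algebraic. \cref{thm-lb-best} follows from \cref{l:main}, which in turn follows from \cref{l:tech}: the subspaces $\Delta_v\subseteq U_G=\bb{Z}_3^{E(G)}$ attached to the vertices (dimension $1$ in general, dimension $2$ at degree-three vertices) are linearly independent once a single vertex is excluded, so $|E(G)|=\dim U_G\ge n+n_3-1$. There are no local reducible configurations in this argument to enumerate, so your main subroutine has nothing to scan for, and your claim that ``the discharging argument concentrates the extra charge on a bounded number of local witnesses'' is unsupported.

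Your branching step is also unjustified at the crucial point: you do not say what the ``at most $8$'' local choices are, why they are exhaustive, or why each branch strictly reduces an excess parameter, and you acknowledge this yourself by flagging it as ``the principal obstacle.'' Flagging a gap is not closing it. By contrast, the $8^k$ factor in the paper arises from a single global computation, not iterated branching: one checks in polynomial time whether the $\Delta_v$ (for $v\ne u$, where $\deg u\ge 4$) are independent. If they are dependent, the proof of \cref{l:tech} directly hands you an irrelevant edge (the edge between an adjacent pair $u'$, $w$ with $x_{u'}=0\ne x_w$). If they are independent, then every nowhere-zero $\ZZ$-flow lies in $V^\perp$, whose dimension $b=|E(G)|-(n+n_3-1)$ satisfies $b\le 3k+1$ by combining $|E(G)|\le\tfrac{5}{3}n+k$ with the handshake bound $2|E(G)|\ge 4n-n_3$; one then enumerates the $2^b=O(8^k)$ assignments of nonzero values to a basis set $B$ of $b$ edges and checks each. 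Your write-up contains none of this, and without the linear-algebraic framework you have no mechanism to produce the irrelevant edge, bound the search space, or justify the running time. As it stands, this is a plan for a proof that does not exist rather than a proof.
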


The proof of Theorem~\ref{thm-lb-best} is linear-algebraic: We prove that a certain system of vectors in the space $U_G=\bb{Z}^{E(G)}_3$ is linearly independent,
showing that the number of edges of $G$ must be at least as large as the size of this system.  Recall that the upper bound on the density of 4-critical planar graphs
follows from the bound on the number of triangles given by Stiebitz~\cite{stiebitz}.  The proof of this bound is also linear-algebraic, but other than that, the two
arguments seem largely unrelated; in particular, Stiebitz's argument is over $\bb{Z}_2$ rather than $\bb{Z}_3$.

Let us also remark that Theorem~\ref{thm-lb-best} together with Observation~\ref{obs-dualcrit} and
the Euler's formula imply that every $n$-vertex 4-critical planar graph other than $K_4$ has at most $2.5(n-2)$ edges, thus giving another proof of Koester's result~\cite{koester2}.

\section{Linear independence of constraints}

We are going to need the following simple observation.
\begin{lem}\label{lemma-deg4}
If $G\neq K_2$ is $\ZZ$-flow-critical graph of maximum degree three, then $G=K_4$.
\end{lem}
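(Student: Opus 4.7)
The statement is essentially immediate from Lemma~\ref{lemma-nocycle}, so the plan is just to unpack that lemma in the cubic setting. First, since $G\neq K_2$ is $\ZZ$-flow-critical, Lemma~\ref{lemma-nocycle} gives $\delta(G)\ge 3$; combined with the hypothesis that $G$ has maximum degree three, this forces $G$ to be cubic. In particular, every vertex of $G$ is a vertex of degree three.

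Next, I would invoke the dichotomy in Lemma~\ref{lemma-nocycle}: either $G$ is an odd wheel, or the vertices of degree three in $G$ induce a forest. In our cubic $G$ the degree-three vertices are all of $V(G)$, so the second alternative would assert that $G$ itself is a forest, contradicting $\delta(G)\ge 3$ (every forest has a leaf). Therefore $G$ must be an odd wheel.

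To finish, I would observe that the odd wheel $W_n$ (an odd cycle $C_n$ together with an additional universal hub) has its hub of degree $n$ and each rim vertex of degree three, so it is cubic if and only if $n=3$, in which case $W_n=K_4$. Hence $G=K_4$, as required. I do not foresee any real obstacle here; the only verification is the trivial fact that $K_4$ is the unique cubic odd wheel.
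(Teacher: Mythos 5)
Your proof is correct and follows essentially the same route as the paper: both use Lemma~\ref{lemma-nocycle} to deduce that $G$ is $3$-regular, note that the degree-three vertices (all of $G$) cannot induce a forest, conclude that $G$ is an odd wheel, and observe that the only cubic odd wheel is $K_4$.
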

\begin{proof}
By Lemma~\ref{lemma-nocycle}, the graph $G$ is $3$-regular, and in particular it contains a cycle.
Lemma~\ref{lemma-nocycle} then furthermore implies that $G$ is an odd wheel, and thus $G=K_4$.
\end{proof}

We will consider a natural dot product $\langle \cdot,\cdot \rangle: U_G \times U_G \to \bb{Z}_3$ defined by
$$\langle c,\phi\rangle=\sum_{e\in E(G)} c(e)\phi(e).$$
For a vertex $v \in V(G)$, let $\delta_v\in U_G$ be defined by letting $\delta_v(e)=[v,e]$ for every $e\in E(G)$.
Thus, $\phi\in U_G$ is a $\ZZ$-flow if and only if $\langle \delta_v,\phi\rangle=0$ for every $v\in V(G)$.
For any edge $e=xy$, we have $\delta_x(e)+\delta_y(e)=0$, and thus
$$\sum_{v\in V(G)} \delta_v=0.$$
This has the following well-known consequence.
\begin{obs}\label{obs-zero-edge}
If a graph $G$ is $\ZZ$-flow-critical, then for every edge $e\in E(G)$, there exists
a $\ZZ$-flow $\phi\in U_G$ such that $\phi(e)=0$ and $\phi(e')\neq 0$ for every $e'\in E(G)\setminus\{e\}$.
\end{obs}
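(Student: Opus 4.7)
The plan is to exploit flow-criticality directly. By definition, $G/e$ admits a nowhere-zero $\ZZ$-flow $\psi$, and the strategy is to lift $\psi$ to a $\ZZ$-flow $\phi$ on $G$ with $\phi(e') = \psi(e')$ for every $e' \in E(G) \setminus \{e\}$; nowhere-zeroness on $E(G)\setminus\{e\}$ is then automatic, and I will argue that $\phi(e) = 0$ is forced.

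The first step is to show that $\psi$ extends uniquely to a $\ZZ$-flow $\phi$ on $G$. Set $\phi(e') = \psi(e')$ for $e' \neq e$; it remains to choose $\phi(e) \in \ZZ$ so that the flow-conservation equations of $G$ all hold. Write $e = xy$ and let $z$ denote the vertex of $G/e$ obtained by identifying $x$ and $y$. Conservation at any $v \notin \{x, y\}$ involves only edges of $G$ other than $e$ and coincides with the corresponding equation at $v$ in $G/e$, hence holds because $\psi$ is a $\ZZ$-flow on $G/e$. Conservation at $x$ reads
$$[x, e]\, \phi(e) + \sum_{e' \ni x,\, e' \neq e} [x, e']\, \psi(e') = 0,$$
which uniquely determines $\phi(e)$, and conservation at $y$ gives a second such equation. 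The two are consistent: their sum equals the conservation equation at $z$ in $G/e$, since the $\phi(e)$-contribution cancels via $[x, e] + [y, e] = 0$ and any edge $e'$ parallel to $e$ (which becomes a loop at $z$) contributes $([x, e'] + [y, e'])\, \psi(e') = 0$. Since $\psi$ is a flow on $G/e$, this sum vanishes, so the equations at $x$ and $y$ agree and determine the same value of $\phi(e)$.

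Hence $\phi$ is a well-defined $\ZZ$-flow on $G$, with $\phi(e') = \psi(e') \neq 0$ for every $e' \neq e$. If $\phi(e)$ were nonzero as well, then $\phi$ would be a nowhere-zero $\ZZ$-flow on $G$, contradicting the hypothesis that $G$ is $\ZZ$-flow-critical. Therefore $\phi(e) = 0$, giving the required flow.

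The only step demanding care is the consistency of the equations at $x$ and $y$ in the extension step; everything else is a direct unfolding of the definitions, and I do not anticipate further obstacles. One could equivalently package the argument by using the identity $\sum_{v\in V(G)}\delta_v=0$ invoked earlier in the section, which forces conservation at $y$ as soon as it holds at all other vertices, but the direct computation above seems cleanest.
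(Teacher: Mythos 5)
Your proposal is correct and takes essentially the same route as the paper: lift a nowhere-zero $\ZZ$-flow from $G/e$ to $G$ by keeping the values off $e$, solve for $\phi(e)$ via conservation at one endpoint, check that conservation holds everywhere else, and conclude $\phi(e)=0$ by criticality. The only cosmetic difference is that the paper gets conservation at the second endpoint for free from the identity $\sum_{v\in V(G)}\delta_v=0$, whereas you verify the consistency of the two endpoint equations by a direct computation — a variant you yourself flag as equivalent.
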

\begin{proof}
Since $G$ is $\ZZ$-flow-critical, there exists a nowhere-zero $\ZZ$-flow $\phi'$ in $G/e$.
Let $e=uv$.  Let $\phi(e')=\phi'(e')$ for every edge $e'\in E(G)\setminus\{e\}$ and let
$$\phi(e)=-\frac{1}{[v,e]}\sum_{e'\in E(G)\setminus \{e\}} [v,e']\cdot \phi'(e'),$$
so that $\ang{\delta_v,\phi}=0$.  Since $\phi'$ is a $\ZZ$-flow, we have $\ang{\delta_x,\phi}=\ang{\delta_x,\phi'}=0$ for
every $x\in V(G)\setminus\{u,v\}$.  Since $\sum_{v\in V(G)} \delta_v=0$, this implies that $\ang{\delta_u,\phi}=0$,
and thus $\phi$ is a $\ZZ$-flow in $G$.  Since $G$ does not have a nowhere-zero $\ZZ$-flow and $\phi(e')=\phi'(e')\neq 0$
for every $e'\in E(G)\setminus\{e\}$, we conclude that $\phi(e)=0$.
\end{proof}

For a vertex $v$ of a graph $G$ and distinct edges $e_1,e_2\in E(G)$ incident with $v$, let $\delta_{v,e_1,e_2}\in U_G$ be defined by
$$\delta_{v,e_1,e_2}(e)=\begin{cases}
[v,e_1]&\text{if $e=e_1$}\\
-[v,e_2]&\text{if $e=e_2$}\\
0&\text{otherwise}
\end{cases}$$
Note that $\delta_{v,e_2,e_1}=-\delta_{v,e_1,e_2}$, and that $\langle \delta_v,\phi\rangle=0$ if and only if $\phi$ sends the same amount of flow away from $v$ through $e_1$ and through $e_2$.

For a vertex $v \in V(G)$ with $\deg(v) \neq 3$, let $\Delta_v$ denote the one dimensional subspace of $U_G$ generated by $\delta_v$.
Consider now a vertex $v \in V(G)$ with $\deg(v) = 3$, and let $e_1$, $e_2$, and $e_3$ be the incident edges.
In this case, we let $\Delta_v$ instead be the two-dimensional subspace of $U_G$ generated by $\delta_v$ and $\delta_{v,e_1,e_2}$.
Note that $\Delta_v$ consists of the zero vector, the vectors $\delta_v$ and $-\delta_v$, and the vectors $\delta_{v,e_i,e_j}$
for distinct $i,j\in \{1,2,3\}$; and in particular $\Delta_v$ does not depend on the choice of the labels of the edges $e_1$, $e_2$, and $e_3$ incident with $v$.
The subspaces $\Delta_v$ consist of the constraints satisfied by nowhere-zero $\ZZ$-flows, in the following sense.
\begin{obs}\label{obs-Deltav}
Let $G$ be a graph and let $v$ be a vertex of $G$.  Suppose that $\phi\in U_G$ is a $\ZZ$-flow in $G$, and if $\deg v=3$, then furthermore $v$ is non-zero on all edges incident with $v$.
Then $\langle c,\phi\rangle=0$ for every $c\in\Delta_v$.
\end{obs}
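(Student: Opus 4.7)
The plan is to check the condition on a generating set of $\Delta_v$ and extend by linearity of $\langle\cdot,\cdot\rangle$, splitting into the two cases of the definition.

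If $\deg v\neq 3$, then $\Delta_v$ is spanned by $\delta_v$, and $\langle \delta_v,\phi\rangle=0$ holds by the very definition of a $\ZZ$-flow. So in this case there is nothing to do beyond invoking linearity.

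If $\deg v=3$ with incident edges $e_1,e_2,e_3$, then $\Delta_v$ is spanned by $\delta_v$ and $\delta_{v,e_1,e_2}$. The first generator is handled as above, so the only real task is to show $\langle \delta_{v,e_1,e_2},\phi\rangle=0$, i.e., that $[v,e_1]\phi(e_1)=[v,e_2]\phi(e_2)$. The flow-conservation equation at $v$ reads
\[
[v,e_1]\phi(e_1)+[v,e_2]\phi(e_2)+[v,e_3]\phi(e_3)=0.
\]
By hypothesis each $\phi(e_i)$ is nonzero in $\bb{Z}_3$, and $[v,e_i]\in\{\pm1\}$, so the three summands are nonzero elements of $\bb{Z}_3$. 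The key elementary fact is that three nonzero elements of $\bb{Z}_3$ sum to zero only when they are all equal (either all $1$ or all $2$), since the other multisets $\{1,1,2\}$ and $\{1,2,2\}$ yield nonzero sums. Hence all three terms coincide, and in particular $[v,e_1]\phi(e_1)=[v,e_2]\phi(e_2)$, as required.

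The main (really only) obstacle is recognising the $\bb{Z}_3$-specific identity above; this is exactly what makes $\Delta_v$ two-dimensional at a degree-three vertex and is the reason the whole argument is tailored to nowhere-zero $\ZZ$-flows. Once that identity is in hand, the observation follows by bilinearity from its verification on the generators $\delta_v$ and $\delta_{v,e_1,e_2}$.
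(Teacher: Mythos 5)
Your proof is correct and is exactly the argument the paper has in mind; the paper leaves this observation unproved, and what you wrote is the intended justification. Reducing to the two generators $\delta_v$ and $\delta_{v,e_1,e_2}$ by linearity, noting that $\langle\delta_v,\phi\rangle=0$ is just flow conservation, and then for degree three using the $\bb{Z}_3$ fact that three nonzero elements summing to zero must all be equal (so that $[v,e_1]\phi(e_1)=[v,e_2]\phi(e_2)=[v,e_3]\phi(e_3)$, hence $\langle\delta_{v,e_1,e_2},\phi\rangle=0$) is precisely the point, and you correctly flag it as the reason $\Delta_v$ can be taken two-dimensional at degree-three vertices.
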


The key observation is that in a $\ZZ$-flow-critical graph $G$, the subspaces $\Delta_v$ for $v\in V(G)$ are linearly independent, with the exception of the identity
$\sum_{v\in V(G)} \delta_v=0$.

\begin{lem}\label{l:tech}
Let $G$ be a graph, and for each vertex $v\in V(G)$, let $x_v$ be a vector from $\Delta_v$.
Suppose that $\sum_{v \in V(G)} x_v=0$ and there exists a vertex $u\in V(G)$ such that $x_u=0$.
If $G$ is $\ZZ$-flow-critical, then $x_w=0$ for every $w \in V(G)$.
\end{lem}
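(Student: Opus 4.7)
My plan is to argue by contradiction. Assume $(x_v)_{v \in V(G)}$ satisfies the hypotheses but some $x_w$ is nonzero, and let $S := \{v \in V(G) : x_v \neq 0\}$, so $u \notin S$ and $S \neq \emptyset$. Since $G$ is connected (by definition of $\ZZ$-flow-critical), there exists an edge $e = vw$ with $v \in S$ and $w \in V(G) \setminus S$. Examining the $e$-th coordinate of the identity $\sum_{v'} x_{v'} = 0$ together with $x_w = 0$ gives $x_v(e) = 0$.

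The first step is to extract structural information from this: $v$ must have degree $3$ and $x_v$ must lie in $\Delta_v \setminus \ZZ\delta_v$. Indeed, any nonzero scalar multiple of $\delta_v$ is nonzero on every edge incident to $v$, which is inconsistent with $x_v(e) = 0$; this handles both the case $\deg(v) \neq 3$ (where $x_v \in \ZZ\delta_v$ automatically) and rules out $x_v \in \ZZ\delta_v$ when $\deg(v) = 3$.

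The main step is then to apply Observation~\ref{obs-zero-edge} to $e$, producing a $\ZZ$-flow $\phi_e$ that vanishes exactly on $e$, and to pair it with the identity $\sum_{v'} x_{v'} = 0$. Observation~\ref{obs-Deltav} annihilates every term $\langle x_{v'}, \phi_e\rangle$ for $v'$ not incident with $e$ (the degree-$3$ case uses that $\phi_e$ is nonzero on all edges at such $v'$), and $\langle x_w, \phi_e\rangle = 0$ since $x_w = 0$; this isolates the single equation $\langle x_v, \phi_e\rangle = 0$. A short calculation using flow conservation of $\phi_e$ at $v$ and $\phi_e(e) = 0$ rewrites $\langle x_v, \phi_e\rangle$ as a nonzero scalar multiple of $z(e_2) - z(e_3)$, where $z(e_i) := [v, e_i]\,x_v(e_i)$ and $e_2, e_3$ are the two edges at $v$ other than $e$. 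Combined with $z(e) = [v, e]\,x_v(e) = 0$ and the $\Delta_v$-condition $z(e) + z(e_2) + z(e_3) = 0$ for a degree-$3$ vertex, this forces $z(e_2) = z(e_3) = 0$ (via $2z(e_2) = 0$ in $\ZZ$), so $x_v$ vanishes on all three edges at $v$, giving $x_v = 0$ — contradicting $v \in S$. The main obstacle I anticipate is merely keeping careful track of signs in the computation of $\langle x_v, \phi_e\rangle$; no separate case analysis of $K_4$ or odd wheels (nor any appeal to Lemma~\ref{lemma-nocycle}) is needed.
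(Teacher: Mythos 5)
Your proof is correct and follows essentially the same approach as the paper: locate an edge $e$ between a zero vertex and a nonzero vertex, deduce $x_v(e)=0$, hence $\deg v = 3$ and $x_v \notin \ZZ\delta_v$, apply Observations~\ref{obs-zero-edge} and~\ref{obs-Deltav} to isolate $\langle x_v,\phi_e\rangle = 0$, and derive a contradiction. Your endgame is a cosmetic variation (you use the linear constraint $z(e)+z(e_2)+z(e_3)=0$ characterizing $\Delta_v$ to conclude $x_v=0$, while the paper identifies $x_v$ with a specific $\delta_{v,e_i,e_j}$ and shows $\phi_e$ would have to vanish on a second edge), but the argument is substantively identical.
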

\begin{proof}
Otherwise, since $G$ is connected, there exist adjacent vertices $u,w\in V(G)$ such that $x_u=0$ and $x_w\neq 0$;
let $e$ be an edge of $G$ between $u$ and $w$.  
Since the edge $e$ is only incident with $u$ and $w$, and $x_v\in \Delta_v$ for every $v\in V(G)$, we have $x_v(e)=0$ for every $v\in V(G)\setminus\{u,w\}$.
Since $\sum_{v \in V(G)} x_v=0$, it follows that 
$$x_w(e)=x_w(e)+x_u(e)=\sum_{v \in V(G)} x_v(e)=0.$$
Since $\delta_w(e)=[w,e]\neq 0$, it follows that $x_w\neq \pm \delta_w$. Since $x_w\in \Delta_w$, we conclude that $w$ has degree three and $x_w=\delta_{w,e_1,e_2}$, where $\{e,e_1,e_2\}$ are the three edges incident with $w$.

By Observation~\ref{obs-zero-edge}, there exists a $\ZZ$-flow $\phi$ in $G$ which is zero exactly on $e$.
By Observation~\ref{obs-Deltav}, we have $\ang{x_v,\phi}=0$ for every $v\in V(G)\setminus\{u,w\}$.  Since $x_u=0$, we also
have $\ang{x_u,\phi}=0$.  Consequently,
$$\ang{x_w,\phi}=\Bigl\langle -\sum_{v\in V(G)\setminus \{w\}} x_v,\phi \Bigr\rangle=-\sum_{v\in V(G)\setminus \{w\}} \ang{x_v,\phi}=0.$$
Since $\phi$ is a $\ZZ$-flow, we have $\ang{\delta_w,\phi}=0$.  Recall that $x_w=\delta_{w,e_1,e_2}$, and thus
$$\ang{\delta_{w,e,e_1},\phi}=\ang{\delta_w+x_w,\phi}=0.$$
Since $\phi(e)=0$, this implies that $\phi(e_1)=0$, which is a contradiction.
\end{proof}	

\section{Proofs}

Since $\Delta_v$ for $v\in V(G)$ are subspaces of the space $U_G$ and the dimension of $U_G$ is $|E(G)|$, Lemma~\ref{l:tech} implies the following lower bound on the number of edges.

\begin{lem}\label{l:main}
If $G\neq K_2,K_4$ is a $\ZZ$-flow-critical $n$-vertex graph with $n_3$ vertices of degree three, then $|E(G)|\ge n+n_3-1$, where the equality holds exactly if $G$ is a wheel.
\end{lem}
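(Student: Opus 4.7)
The plan is to observe that $\sum_{v \in V(G)} \Delta_v$ is a subspace of $U_G$, so $|E(G)| = \dim U_G \ge \dim \sum_v \Delta_v$, and then to compute this dimension. By the standard formula, $\dim \sum_v \Delta_v = \sum_v \dim \Delta_v - \dim R$, where $R = \{(x_v) \in \prod_v \Delta_v : \sum_v x_v = 0\}$ is the kernel of the summation map. Since $\dim \Delta_v = 1$ when $\deg v \neq 3$ and $\dim \Delta_v = 2$ otherwise, the sum $\sum_v \dim \Delta_v$ equals $n + n_3$, and so the inequality $|E(G)| \ge n + n_3 - 1$ will follow once we establish $\dim R = 1$.

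That $\dim R \ge 1$ is immediate: the tuple $(\delta_v)_v$ is a nonzero element of $R$ (since $\sum_v \delta_v = 0$ and each $\delta_v \neq 0$). For $\dim R \le 1$, Lemma~\ref{lemma-deg4} supplies a vertex $v_0$ with $\deg v_0 \neq 3$ (using $G \neq K_2, K_4$), so $\Delta_{v_0}$ is one-dimensional. For any $(x_v) \in R$, writing $x_{v_0} = \alpha \delta_{v_0}$, the tuple $(x_v) - \alpha (\delta_v)_v \in R$ has its $v_0$-coordinate equal to zero, and is therefore identically zero by Lemma~\ref{l:tech}. Thus $R$ is spanned by $(\delta_v)_v$, so $\dim R = 1$.

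For the equality case, $|E(G)| = n + n_3 - 1$ holds if and only if $\sum_v \Delta_v = U_G$, or dually, if and only if the only \emph{balanced flow}---a $\ZZ$-flow on $G$ such that at every degree-$3$ vertex all three incident edges carry the same signed outgoing flow---is zero. For the forward direction, an odd wheel $W_k$ (with $k \ge 5$ odd) satisfies $|E(W_k)| = 2k = n + n_3 - 1$; equivalently, the balance condition around the odd rim cycle $C_k$ forces the flow values to alternate in sign, so they must all vanish modulo~$3$. For the converse, I assume $G$ is $\ZZ$-flow-critical and not a wheel, and aim to establish the strict inequality $|E(G)| \ge n + n_3$. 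By Lemma~\ref{lemma-nocycle}, the set $S$ of degree-$3$ vertices induces a forest with $c$ components, yielding the identity $|E(G)| = 2 n_3 + c + |E(G[V \setminus S])|$. When $n_3 = 0$, we have $|E(G)| \ge 2n$ directly; and when $G[V \setminus S]$ is connected with $n_3 \ge 1$, the bound $|E(G[V \setminus S])| \ge n - n_3 - 1$ combined with $c \ge 1$ yields $|E(G)| \ge n + n_3$.

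The main obstacle is the remaining case, where $G[V \setminus S]$ is disconnected; here direct counting does not suffice, and I would instead construct a non-trivial balanced flow to contradict equality. If $G[V \setminus S]$ contains a cycle, then a nonzero $\ZZ$-flow supported on that cycle (extended by zero elsewhere) is vacuously balanced at every degree-$3$ vertex. The delicate subcase is when every component of $G[V \setminus S]$ is a tree; here balanced flows are parametrized by a tuple $(b_1, \ldots, b_c) \in \ZZ^c$ indexing the trees of $G[S]$ (with values $\pm b_i$ assigned to each tree according to its bipartition), subject to one linear constraint per component of $G[V \setminus S]$. I would use flow-criticality together with the bipartite structure of these tree components to show that the resulting linear system is rank-deficient, yielding a non-trivial solution and hence the desired contradiction.
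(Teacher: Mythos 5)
Your proof of the inequality $|E(G)|\ge n+n_3-1$ is correct and is essentially the paper's argument in a slightly different dress: where the paper cites Lemma~\ref{l:tech} to say that the $\Delta_v$, $v\neq w$, are linearly independent for a fixed vertex $w$ of degree at least four, you instead compute the kernel of the summation map $\prod_v \Delta_v\to U_G$ and show (using the same Lemma~\ref{l:tech} and Lemma~\ref{lemma-deg4}) that it is exactly the line spanned by $(\delta_v)_v$. These are equivalent statements, and both give $\dim\sum_v\Delta_v = (n+n_3)-1$.

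The equality characterization, however, has a genuine gap, which you acknowledge yourself. Your route is combinatorial: you decompose $|E(G)|$ via the forest $G[S]$ (where $S$ is the set of degree-$3$ vertices) and try to either count directly (when $G[V\setminus S]$ is connected) or exhibit a nonzero ``balanced flow'' in $\bigl(\sum_v\Delta_v\bigr)^\perp$. The remaining subcase --- $G[V\setminus S]$ a disconnected forest --- is precisely the hard one, and the sketch you give for it does not obviously close. To be concrete: if equality $|E(G)|=n+n_3-1$ held in that subcase, then $G[V\setminus S]$ has exactly $c'=c+1$ components (with $c$ the number of trees in $G[S]$), and the space of balanced flows is parametrized by the tree-values $(b_1,\dots,b_c)\in\ZZ^{c}$ subject to one linear constraint per component of $G[V\setminus S]$, i.e.\ $c+1$ constraints on $c$ unknowns. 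You would need to prove that this over-determined system is rank-deficient, and the appeal to ``flow-criticality together with the bipartite structure'' is a plan, not a proof; it is not clear this can be carried out without substantial further work.

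The paper closes the equality case by a different and self-contained argument that you should compare with yours. It picks a leaf $u$ of the forest $G[S]$; its two neighbours $w_1,w_2$ lie outside $S$, so the edges $uw_1=e_1$ and $uw_2=e_2$ are not incident to any degree-$3$ vertex other than $u$. Observation~\ref{obs-zero-edge} (flow-criticality) supplies flows $\phi_1,\phi_2$ vanishing only on $e_1$, respectively $e_2$. By Observation~\ref{obs-Deltav}, both lie in $\bigl(\sum_{v\neq u}\Delta_v\bigr)^\perp$, which under the equality hypothesis has dimension $1$; but $\phi_1,\phi_2$ are visibly independent because $\phi_1(e_1)=0\neq\phi_2(e_1)$ and $\phi_1(e_2)\neq0=\phi_2(e_2)$. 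This uses flow-criticality exactly once, through Observation~\ref{obs-zero-edge}, and avoids any structural case analysis of $G[V\setminus S]$. I would recommend replacing your third paragraph onward with an argument along these lines, or fully carrying out the rank-deficiency claim, which at present is a gap.
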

\begin{proof}
        By Lemma~\ref{lemma-nocycle}, $G$ has minimum degree at least three, and by Lemma~\ref{lemma-deg4}, $G$ has a vertex $w$ of degree at least four.  By Lemma~\ref{l:tech}, the subspaces $\Delta_v$ of $U_G$ for $v\in V(G)\setminus \{w\}$
	are linearly independent.  Therefore,
	$$|E(G)|=\dim U_G\ge \sum_{v\in V(G)\setminus \{w\}} \dim \Delta_v = n + n_3 - 1.$$
	Suppose now for a contradiction that $G$ is not a wheel and $|E(G)|=\dim U_G=n + n_3 - 1$.  Clearly, $G$ has a vertex of degree three, as otherwise we would have $n_3=0$ and $|E(G)|\ge 2n\neq n-1+n_3$.
	By Lemma~\ref{lemma-nocycle}, the subgraph of $G$ induced by vertices of degree three is a forest.  Let $u$ be a leaf of the forest; then $u$ has distinct neighbors $w_1$ and $w_2$
	of degree at least four.  For $i\in\{1,2\}$, let $e_i$ be the edge of $G$ between $u$ and $w_i$, and let $\phi_i$ be the $\ZZ$-flow in $G$ which is zero only on $e_i$
	obtained using Observation~\ref{obs-zero-edge}.

	By Lemma~\ref{l:tech}, the subspaces $\Delta_v$ for $v\in V(G)\setminus \{u\}$ are linearly independent, and thus the subspace $V=\sum_{v\in V(G)\setminus \{u\}} \Delta_v$
	has dimension $n+n_3-2$.  Hence, the subspace
	$$V^\perp=\{\phi\in U_G:\ang{c,\phi}=0\text{ for every $c\in V$}\}$$
	has dimension $\dim U_G-\dim V=1$.  Note that $\phi_1$ and $\phi_2$ are $\ZZ$-flows non-zero on all edges incident with vertices of degree three other than $u$,
	and thus Observation~\ref{obs-Deltav} implies that $\phi_1,\phi_2\in V^\perp$.  However, $\phi_1$ and $\phi_2$ are linearly independent, since $\phi_1(e_1)=0\neq \phi_2(e_1)$
	and $\phi_1(e_2)\neq 0=\phi_2(e_2)$.  This is a contradiction, implying that $|E(G)|\ge n+n_3$.
\end{proof} 

Lemma~\ref{l:main} immediately implies our main result.

\begin{proof}[Proof of Theorem~\ref{thm-lb-best}]
        By Lemma~\ref{lemma-nocycle}, $F$ has minimum degree at least three.  If $F\neq K_4$ is a wheel, then $n\ge 6$ (the wheel with 5 vertices has a nowhere-zero $\ZZ$-flow) and $|E(G)|=2(n-1)\ge \tfrac{5}{3}n$.
        
        Otherwise, let $n_3$ be the number of vertices of $F$ of degree three.  
        By Lemma~\ref{l:main}, we have $|E(G)|\ge n+n_3$.
	Moreover, we clearly have $2|E(G)|=\sum_{v\in V(G)} \deg v\ge 4n-n_3$.
	Summing the two inequalities gives $3|E(G)|\geq 5n$.
\end{proof}

Moreover, the algorithm postulated by Corollary~\ref{cor-locate} also easily follows.
\begin{proof}[Proof of Corollary~\ref{cor-locate}]
If $G$ contains a vertex of degree one, then $G$ does not contain a nowhere-zero $\ZZ$-flow.  If $G$ contains a vertex of degree two,
then the incident edges are nowhere-zero $\ZZ$-flow irrelevant.

Suppose now that $G$ has minimum degree at least three, and let $n_3$ be the number of vertices of $G$ of degree three.
If $G$ is $3$-regular, then it has a nowhere-zero $\ZZ$-flow if and only if it is bipartite.
Otherwise, let $u$ be a vertex of $G$ of degree at least four.
Let us now check whether the subspaces $\Delta_v$ for $v\in V(G)\setminus\{u\}$ are linearly independent.
If not, then we find a system of vectors $x_v\in \Delta_v$ for $v\in V(G)$ such that $x_u=0$, not all the vectors are zero, and $\sum_{v\in V(G)} x_v=0$.
Since $G$ is connected, $x_u=0$, and not all the vectors are zero, there exist adjacent vertices $u',w\in V(G)$ such that $x_{u'}=0\neq x_w$.
Then the first part of the proof of Lemma~\ref{l:tech} shows that the edge between $u'$ and $w$ is nowhere-zero $\ZZ$-flow irrelevant.

Finally, let us consider the case that the subspaces $\Delta_v$ for $v\in V(G)\setminus\{u\}$ are linearly independent, and thus the
subspace $V=\sum_{v\in V(G)\setminus\{u\}}$ has dimension $n+n_3-1$.  Let $V^\perp$ be defined as in the proof of Lemma~\ref{l:main},
and note that $\phi\in V^\perp$ for every nowhere-zero $\ZZ$-flow in $G$.  Let $b=\dim V^\perp=|E(G)|-\dim V=|E(G)|-(n+n_3-1)$.
Recall that $2|E(G)|=\sum_{v\in V(G)} \deg v\ge 4n-n_3$.  Therefore,
$$5n+3k\ge 3|E(G)|\ge (n+n_3-1+b)+(4n-n_3)=5n+b-1,$$
and thus $b\le 3k+1$.  The vectors of $V^\perp$ are uniquely determined by their values on a subset $B$ of edges of $G$ of size $b$;
that is, for each edge $e\in E(G)\setminus B$, we can compute a vector $c_e\in \ZZ^B$ such that for every $\phi\in V^\perp$,
we have $\phi(e)=\sum_{e'\in B} c_e(e')\phi(e')$.
Hence, it suffices to test the $2^b=O(8^k)$ possible choices of assignments of non-zero values to the edges in $B$, and check whether
the corresponding elements of $V^\perp$ are nowhere-zero (they are automatically $\ZZ$-flows).
\end{proof}

\bibliographystyle{alpha}
\bibliography{3FlowCrit}

\end{document}